\newtheorem{thm}{Theorem}
\newtheorem{cor}{Corollary}
\newtheorem{defn}{Definition}
\newtheorem{rem}{Remark}
\newcommand{\Rset}{\mathbb{R}}
\newcommand{\Mcal}{{\cal M}}
\newcommand{\Pcal}{{\cal P}}
\newcommand{\Wcal}{{\mathcal{W}}}
\newcommand{\Xcal}{{\cal X}}
\newcounter{l1}
\newcounter{l2}
\newcounter{l3}
\newcommand{\bdotlist}{\begin{list}{$\bullet$}{}}
\newcommand{\bboxlist}{\begin{list}{$\Box$}{}}
\newcommand{\bbboxlist}{\begin{list}{\raisebox{.005in}{{\tiny
$\blacksquare$ \ \ }}}{}}
\newcommand{\bdashlist}{\begin{list}{$-$}{} }
\newcommand{\blist}{\begin{list}{}{} }
\newcommand{\barablist}{\begin{list}{\arabic{l1}}{\usecounter{l1}}}
\newcommand{\balphlist}{\begin{list}{(\alph{l2})}{\usecounter{l2}}}
\newcommand{\bAlphlist}{\begin{list}{\Alph{l2}.}{\usecounter{l2}}}
\newcommand{\bdiamlist}{\begin{list}{$\diamond$}{}}
\newcommand{\bromalist}{\begin{list}{(\roman{l3})}{\usecounter{l3}}}
\newcommand{\rel}{w}
\newcommand{\ran}{\bm{w}}
\newcommand{\uset}{\mathcal{W}}
\newcommand{\supp}{\sigma}
\DeclareMathOperator*{\argmin}{arg\,min}
\title{\bf Distributionally Robust Regret Minimization}
\author{{\Large Eilyan Bitar}\thanks{School of Electrical and Computer Engineering, Cornell University, Ithaca, NY, USA. Email: eyb5@cornell.edu }}
\date{}
\begin{document}
\maketitle

\begin{abstract}
We consider decision-making problems involving the optimization of linear objective functions with uncertain coefficients. 
The probability distribution of the coefficients---which are assumed to be stochastic in nature---is unknown to the decision maker but is assumed to lie within a given ambiguity set, defined as a type-1 Wasserstein ball centered at a given nominal distribution. To account for this uncertainty, we minimize the worst-case expected \emph{regret} over all distributions in the  ambiguity set. 
Here, the (ex post) regret experienced by the decision maker  is defined as the  difference between the cost incurred by a chosen decision given a particular realization of the objective coefficients and the minimum achievable cost  with perfect knowledge of the coefficients at the outset. 
For this class of ambiguity sets,  the worst-case expected regret is shown to
equal the expected regret under the nominal distribution plus a regularization term that has the effect of drawing optimal solutions toward the ``center'' of the feasible region as the radius of the ambiguity set increases. This novel form of regularization is also shown to arise when minimizing the worst-case conditional value-at-risk (CVaR) of regret. We show that, under certain conditions,  distributionally robust regret minimization problems over type-1 Wasserstein balls can be recast as tractable finite-dimensional convex programs.  
\end{abstract}

%%%%%%%%%%%%%%%%%%%%%%%%%%%%%%%%%%%%%%%
\section{Introduction}
We consider a class of uncertain linear optimization problems in which the coefficients of the objective function are unknown, and a decision must be made without prior knowledge of these coefficients.  To handle such uncertainty in the problem data, a common approach is to assume that the unknown parameters belong to a given set, known as the uncertainty set, and 
to construct a solution that is optimal for the least-favorable parameter values in the given uncertainty set.
This corresponds to the following class of robust linear optimization problems:
\begin{align} \label{eq:RO}
    \inf_{x \in \Xcal} \, \sup_{\rel \in \uset} \  \rel^\top x.
\end{align}
In problem \eqref{eq:RO}, the decision vector  $x \in \Rset^n$ is constrained to lie within a  feasible set  $\Xcal \subseteq \Rset^n$ and and the vector of uncertain parameters $w \in \Rset^n$ is confined to an uncertainty set $\Wcal \subseteq \Rset^n$.

Robust optimization problems, as defined in \eqref{eq:RO}, are known to be computationally tractable under certain conditions. For example, if the feasible set $\Xcal$ is polyhedral and the uncertainty set $\Wcal$ is polyhedral or ellipsoidal, then the problem \eqref{eq:RO} can be reformulated as a polynomially sized linear program or a second-order cone program, respectively \cite{ben1999robust, ben2009robust}. However, despite its broad appeal, a potential drawback of the robust optimization framework is its inherent conservatism. By optimizing for the worst-case scenario within the uncertainty set, robust optimization methods can lead to overly cautious decisions that may underperform when facing less severe realizations of the uncertain parameters.

To mitigate this conservatism, robust regret minimization has been explored as an alternative to the robust optimization approach. Originally proposed in \cite{savage1951theory},  regret measures the difference between the cost realized by a chosen decision  and the best possible cost that could have been achieved had the uncertain parameters been known at the outset. Mathematically, the regret incurred by a decision $x \in \Xcal$ given a particular realization of the uncertain parameters $ w\in \Wcal$ is defined as
\begin{align}
    R(x, \,\rel) := \rel^\top x - \inf_{y \in \Xcal} \rel^\top y. 
\end{align}
With this definition, the robust regret minimization problem can be expressed as
\begin{align} \label{eq:MRO}
    \inf_{x \in \Xcal} \, \sup_{\rel \in \uset}  \ R(x,\, \rel).
\end{align}
We note that a related class of robust regret minimization problems, where uncertainty is confined to the constraints on $x$, is known to be computationally tractable. Specifically, if  the feasible set $\Xcal$  and the uncertainty set $\Wcal$ are both polyhedral, and the uncertain parameters $w$ only appear linearly in the right-hand side of the constraints on $x$, then  the corresponding robust regret minimization problem can be reformulated as an equivalent linear program \cite{gabrel2010robustness, bertsimas2020relative}. In contrast, robust regret minimization problems with uncertain coefficients in the objective function, as defined in \eqref{eq:MRO}, are more difficult to solve. In particular, problem \eqref{eq:MRO} has been proven to be strongly NP-hard when the uncertainty set is an axis-aligned hyperrectangle \cite{averbakh2005complexity}. As a result, most of the algorithms that have been developed to solve problem \eqref{eq:MRO} are either conservative in nature or computationally expensive.\footnote{We refer the reader to \cite{poursoltani2022adjustable} for a comprehensive overview of the various algorithms that have been developed to tackle this class of single-stage robust regret minimization problems.}  
There are, however, special cases of  the minimax regret optimization problem \eqref{eq:MRO} that can be solved in polynomial time, including specific resource allocation problems with rectangular uncertainty sets \cite{averbakh2004minmax, conde2005complexity}, and certain inventory management problems with simplex uncertainty sets \cite{poursoltani2022adjustable}.

%%%%%%%%%%%%%%%%%%%%%%%%%%%%%%%%%%%%%%%%%
\subsection{A Distributionally Robust Approach to Regret Minimization}
In this paper, we approach the problem of regret minimization from the perspective of distributionally robust optimization. Unlike the robust formulation in \eqref{eq:MRO}, where  the unknown parameters are allowed to take any value within a predefined uncertainty set,  the distributionally robust approach treats the uncertain parameters as a random vector $\ran$ whose distribution $P$ is unknown, but  assumed to lie within a given ambiguity set $\Pcal$---a predefined family of possible distributions. 
Using the given ambiguity set, decisions are constructed to minimize the worst-case expected regret  across all possible distributions within the ambiguity set. Mathematically, this leads to the following distributionally robust regret minimization  problem considered in this paper:
    \begin{align} \label{eq:DRRO}
        \inf_{x \in \Xcal} \, \sup_{P \in \Pcal}  \, \mathds{E}_P\left[{ R(x, \, \ran)} \right].
    \end{align}

It is important to note that, in problem \eqref{eq:DRRO}, regret is evaluated \emph{ex post}, meaning it is assessed after the uncertain parameters have been realized. This differs from the \emph{ex ante} approach to distributionally robust regret minimization,  where the ex ante regret of a decision is defined as the difference between the \emph{expected cost} incurred by that decision and the minimum expected cost, assuming perfect knowledge of the underlying probability distribution.  
Distributionally robust \emph{ex ante} regret minimization has been extensively studied  
in the context of the newsvendor model \cite{chen2021regret,perakis2008regret,yue2006expected,zhu2013newsvendor},  two-stage stochastic programs \cite{cho2024wasserstein}, and  multi-stage stochastic programs defined over discrete probability spaces \cite{poursoltani2024risk}.

The formulation in \cite{natarajan2014probabilistic} is more closely aligned with the approach taken in this paper, as it  employs an \emph{ex post} measure of regret. 
Specifically, they define the ambiguity set as the collection of all distributions with  common support  (a hyperrectangle),  mean, and higher-order marginal moments. With this ambiguity set, they consider a risk-sensitive variant of the optimization model in \eqref{eq:DRRO}, where decisions are chosen to minimize the  worst-case conditional value-at-risk (CVaR) of the ex post regret.  However, since the moments of a distribution are typically estimated from data and, therefore, may themselves be uncertain, 
we consider an alternative class of (distance-based) ambiguity sets that do not impose such rigid restrictions a priori. Specifically, in this paper, the ambiguity set is taken to be the type-1 Wasserstein ball distributions centered at a given nominal distribution (e.g., an empirical distribution constructed from measured data), where the radius of this ball represents the  decision maker's confidence  in the accuracy of the nominal distribution. %which is often estimated from data. 

While the focus of this  paper is on single-stage stochastic programs,  we note that
Wasserstein distance-based ambiguity sets have also been utilized to synthesize distributionally robust (ex post) regret optimal control polices for unconstrained stochastic linear-quadratic optimal control problems, where ambiguity in the process disturbance and measurement noise distributions is modeled using  type-2 Wasserstein balls \cite{al2023distributionally, hajar2023wasserstein, kargin2024wasserstein}.

%%%%%%%%%%%%%%% Summary of Contributions %%%%%%%%%%%%%%
\subsection{Main Results and Paper Outline}
In Section \ref{sec:DRRM}, we introduce the specific class of distributionally robust (ex post) regret minimization problems \eqref{eq:DRRO} considered in this paper, where the ambiguity set $\Pcal$ is defined as a type-1 Wasserstein ball, and the feasible region $\Xcal$ is assumed to be  nonempty, compact, and  possibly nonconvex. In Section \ref{sec:reform}, we show that the worst-case expected regret of a decision $x \in \Xcal$ can be expressed as the sum of the expected regret under the nominal distribution and a regularization term. This regularization term, which scales with the radius of the ambiguity set,  penalizes the distance of the decision $x$ to the farthest point in the feasible set.  This  has the effect of drawing optimal solutions to problem \eqref{eq:DRRO} toward the ``center'' of the feasible region as the Wasserstein radius increases. 

Based on this reformulation, the Wasserstein distributionally robust regret minimization problem \eqref{eq:DRRO} is shown to ``interpolate'' between the nominal expected cost minimization problem, as the radius of the ambiguity set approaches zero, and the robust regret minimization problem \eqref{eq:MRO} with a norm-bounded uncertainty set, as the radius of the ambiguity set  grows large. In Section \ref{sec:cvar}, this specific form of regularization is also shown to emerge when considering a risk-sensitive variant of problem \eqref{eq:DRRO}, where decisions are selected to  minimize the worst-case conditional value-at-risk of regret over all distributions in the type-1 Wasserstein ball. 

Finally, while the regularization function is shown to be NP-hard to compute in general (it is equivalent to a norm maximization problem over the feasible set $\Xcal$), in Section \ref{sec:tractable}, we identify conditions on the structure of the feasible set  and the choice of norm used in the definition of the Wasserstein distance that enable the equivalent reformulation of distributionally robust regret minimization problem \eqref{eq:DRRO_reduction} as a tractable convex program.  

%%%%%%%%%%%%%%%%%%%%%%%%%%%%%%%%%%%%%%%
\subsection{Notation}
Given a norm $\| \cdot\|$ on $\Rset^n$, the dual norm is defined as  $\|x\|_* := \sup\{ x^\top y \, | \, \|y\| \leq 1\}$ for all $x \in \Rset^n$.  
The support function associated with a set $\Xcal \subseteq \Rset^n$ is defined as $\sigma_{\Xcal}(y) := \sup_{x \in \Xcal} x^\top y $ for all $y \in \Rset^n$. 
 We use boldface symbols to denote random variables, and non-boldface symbols to denote particular values in the range of a random variable and other deterministic quantities.

%%%%%%%%%%%%%%%%%%%%%%%%%%%%%%%%%%%%%%%
\section{Distributionally Robust Expected Regret Minimization}  \label{sec:DRRM}

In this section, we employ a duality theory for optimal transport problems to show that the distributionally robust expected regret minimization problem \eqref{eq:DRRO}, with a type-1 Wasserstein distance-based ambiguity set,  is equivalent to a regularized version of the expected cost minimization problem under the nominal distribution.
Interestingly, the resulting form of regularization is shown to encourage solutions that are ``centrally positioned'' within the feasible set $\Xcal$. In Section \ref{sec:tractable}, we establish conditions on the structure of the feasible set and the choice of norm used in defining the Wasserstein distance that enable the reformulation of problem \eqref{eq:DRRO} as a finite-dimensional convex program. In Section \ref{sec:comparison}, we draw a comparison between this framework and the more widely studied distributionally robust expected cost minimization approach.

\subsection{Strong Duality for Distributionally Robust Regret Minimization Problems} \label{sec:reform}
Unless stated otherwise, we let $\| \cdot\|$ be an arbitrary norm on $\Rset^n$, and denote by
 $\Mcal(\Rset^{n})$  the collection of Borel probability measures $P$ on $\Rset^{n}$ that satisfy $\mathds{E}_P\left[ \|\ran \|  \right]  < \infty $.
The  type-1 Wasserstein distance between probability measures is defined as follows.

\begin{defn}[Type-1 Wasserstein Distance] \rm \label{defn:wass_dist} The \emph{type-1 Wasserstein distance} between two distributions $P_1, P_2 \in \Mcal(\Rset^n)$  is defined as 
\begin{align*}
    W_1(P_1, \, P_2) := \hspace{-.075in}\inf_{\pi \in \Pi(P_1, \, P_2)}  \int_{\Rset^{N_x} \times \Rset^{N_x}} \hspace{-.05in} \| z_1 - z_2 \| \pi(d z_1, \, d z_2),
\end{align*}
 where $\Pi(P_1, \, P_2)$ denotes the set of all joint distributions  in $\Mcal(\Rset^n \times \Rset^n)$ with marginal distributions $P_1$ and $P_2$.
\end{defn}
Given a \emph{nominal distribution} $P_0 \in \Mcal( \Rset^n)$, we define the ambiguity set $\Pcal$ as the set of all distributions 
whose type-1 Wasserstein distance to $P_0$ is at most $r \geq 0$, i.e., 
\begin{align} \label{eq:amb_set}
    \Pcal := \{ P \in \Mcal( \Rset^n) \, | \, W_1(P, P_0) \leq r \}.
\end{align}
The \emph{radius of the ambiguity set}, $r$, can be interpreted as encoding the decision maker's level of confidence in how accurately the nominal distribution reflects the true underlying distribution. In particular, the ambiguity set becomes a singleton  containing only the nominal distribution if the ambiguity set radius is decreased to zero.

Given this specific family of ambiguity sets, we now turn our attention to the inner maximization (worst-case expectation) in the distributionally robust regret minimization problem \eqref{eq:DRRO}.  Strong duality has been shown  to hold for a broad family of Wasserstein worst-case expectation problems \cite{blanchet2019quantifying, gao2022distributionally, mohajerin2018data}.
In particular, the following result establishes conditions under which strong duality is guaranteed to hold, along with a characterization of the corresponding dual optimization problem.

\begin{thm}[Strong Duality, \cite{gao2022distributionally}] \label{thm:strong_duality} \rm Let $r>0$ and let  $f: \Rset^{n} \rightarrow \Rset$ be a Borel-measurable function such that $\mathds{E}_{P_0}[|f(\ran)|] < \infty$.
It holds that
\begin{align} \label{eq:strong_duality}
   \sup_{P \in \Pcal} \mathds{E}_P[f(\ran)]  \,  = \, \inf_{\lambda \geq 0} \mathds{E}_{P_0}  \bigg[ \sup_{z \in \Rset^n}  \big\{  f(z)  + \lambda (r - \| z - \ran\|)    \big\}    \bigg].
\end{align}
\end{thm}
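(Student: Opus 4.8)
The plan is to prove the two inequalities separately; weak duality ($\leq$) is routine, and the entire difficulty lies in the reverse inequality. For weak duality, fix any $P \in \Pcal$ and any $\lambda \geq 0$. For a coupling $\pi \in \Pi(P, P_0)$ and any pair $(z_1, z_2)$, taking $z = z_1$ inside the inner supremum gives the pointwise inequality $f(z_1) \le \sup_z\{f(z) - \lambda\|z - z_2\|\} + \lambda\|z_1 - z_2\|$. Integrating against $\pi$ and using that the second marginal of $\pi$ is $P_0$ yields
\[
\mathds{E}_P[f(\ran)] \le \mathds{E}_{P_0}\Big[\sup_z \{f(z) - \lambda\|z - \ran\|\}\Big] + \lambda \int \|z_1 - z_2\|\,\pi(dz_1, dz_2).
\]
Choosing $\pi$ to be an optimal coupling for $W_1(P, P_0)$ makes the last integral equal to $W_1(P,P_0) \le r$, which (as $\lambda \ge 0$) we bound by $\lambda r$. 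Taking $\sup$ over $P \in \Pcal$ and $\inf$ over $\lambda \ge 0$ gives ``$\le$''.

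For the reverse inequality I would first reformulate the primal. Any $P$ with $W_1(P, P_0) \le r$ is the first marginal of a coupling with second marginal $P_0$ and transport cost at most $r$; disintegrating such a coupling against $P_0$ as $\pi(dz_1, dz_2) = K(dz_1 \mid z_2)\,P_0(dz_2)$ recasts the worst-case expectation as an infinite-dimensional linear program over stochastic kernels $K$ with a single scalar budget constraint,
\[
\sup_{K}\Big\{ \mathds{E}_{P_0}\!\!\int f(z)\,K(dz \mid \ran)\ :\ \mathds{E}_{P_0}\!\!\int \|z - \ran\|\,K(dz \mid \ran) \le r \Big\}.
\]
Dualizing that one constraint with a multiplier $\lambda \ge 0$ and interchanging $\sup_K$ with $\inf_{\lambda \ge 0}$, the inner maximization over $K$ decouples pointwise in the conditioning variable (placing conditional mass at near-maximizers of $f(z) - \lambda\|z - z_2\|$), producing exactly the claimed dual objective $\mathds{E}_{P_0}[\sup_z\{f(z) - \lambda\|z - \ran\|\}] + \lambda r$.

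The crux, and the step I expect to be the main obstacle, is justifying that the $\sup_K$--$\inf_\lambda$ interchange is tight, i.e.\ that the Lagrangian relaxation of the budget constraint has no gap. I would establish this through the value function $v(\delta) := \sup\{\mathds{E}_P[f(\ran)] : W_1(P, P_0) \le \delta\}$. Since $W_1(\cdot, P_0)$ is convex and $P \mapsto \mathds{E}_P[f(\ran)]$ is linear, $v$ is concave and nondecreasing on $[0,\infty)$, so it admits a supporting line at $\delta = r$ of nonnegative slope $\lambda^\star$; a short computation then shows the dual function evaluated at $\lambda^\star$ equals $v(r)$, which is the primal value. The hypothesis $r > 0$ is precisely what makes this work: the nominal distribution satisfies $W_1(P_0, P_0) = 0 < r$, so the budget constraint is strictly feasible, placing $r$ in the interior of the effective domain of $v$ and ruling out a duality gap (a Slater-type condition).

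Finally, I would discharge two measure-theoretic technicalities along the way: that $z_2 \mapsto \sup_z\{f(z) - \lambda\|z - z_2\|\}$ is $P_0$-measurable, and that the pointwise supremum is attained in the kernel problem up to an arbitrary $\varepsilon$ via a measurable selection of near-optimizers; both are standard for Borel-measurable $f$. The assumption $\mathds{E}_{P_0}[|f(\ran)|] < \infty$ ensures the objectives are well defined, under the convention that both sides equal $+\infty$ simultaneously when $f$ grows too quickly for any finite $\lambda$ to render the dual finite.
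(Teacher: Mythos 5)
The paper does not actually prove Theorem \ref{thm:strong_duality}: it is imported as a black box from the cited reference \cite{gao2022distributionally}, so there is no internal proof to compare against, and your proposal must be judged against that source. On its own merits your argument is correct in outline, and it takes the classical Lagrangian-duality route: weak duality via couplings, reformulation of the primal as a linear program over stochastic kernels with a single scalar budget constraint, and closure of the duality gap through concavity of the value function $v(\delta)$ and a supporting line at $\delta = r$, which is exactly where the hypothesis $r>0$ is used. Two points need slightly more care than your sketch gives them. First, the supporting-line step presupposes $v(r)<\infty$; since $v$ is concave, nondecreasing, and finite at $0$ (by $\mathds{E}_{P_0}[|f(\ran)|]<\infty$), it is either finite on all of $[0,\infty)$ or identically $+\infty$ on $(0,\infty)$, and in the latter case equality of the two sides already follows from weak duality---this case split should precede the invocation of the supergradient rather than be left to a convention stated at the end. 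Second, the deferred measurability issues are indeed standard but worth naming: the map $z_2 \mapsto \sup_{z}\{f(z)-\lambda\|z-z_2\|\}$ is measurable because it is either $\lambda$-Lipschitz or identically $+\infty$ (a supremum of $\lambda$-Lipschitz functions), while the $\varepsilon$-optimal selection needed to decouple the kernel problem requires a measurable-selection theorem (Jankov--von Neumann type) and in general produces only universally measurable selectors, which suffices for integration against $P_0$; also, the passage from $W_1(P,P_0)\le r$ to a coupling with cost at most $r$ uses attainment of optimal couplings, which holds here since the norm cost is lower semicontinuous. By contrast, the proof in \cite{gao2022distributionally} closes the gap constructively, working from first-order conditions on the dual function in $\lambda$ and explicitly building near-worst-case distributions by transporting nominal mass toward approximate maximizers; that approach additionally yields structural information about worst-case distributions, whereas your value-function argument is shorter and makes the role of the Slater-type condition $r>0$ more transparent.
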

We utilize Theorem \ref{thm:strong_duality} to derive an equivalent dual reformulation for the worst-case expected regret over the type-1 Wasserstein ball of distributions. 

\begin{thm}[Worst-Case Expected Regret] \label{thm:DRRO} \rm Let $r \geq 0$  and assume that the feasible set $\Xcal \subset \Rset^n$ is nonempty and compact. The worst-case expected regret incurred by a decision $x \in \Xcal$ can be expressed as:
\begin{align} \label{eq:worst_case_expec_reform}
    \sup_{P \in \Pcal}  \, \mathds{E}_P\left[{ R(x, \, \ran)} \right] \,  = \, \mathds{E}_{P_0}[R(x, \, \ran) ] \, + \,  r \sup_{v \in \Xcal} \|  x - v \|_* .
\end{align}
Furthermore, a point $x \in \Xcal$ is an optimal solution to the distributionally robust regret minimization problem \eqref{eq:DRRO} if and only if it is an optimal solution to the following problem:
   \begin{align} \label{eq:DRRO_reduction}
\inf_{x \in \Xcal}  \Big\{ \mathds{E}_{P_0}[\ran^\top x]   \ + \ r \sup_{v \in \Xcal} \|x - v\|_*   \Big\}.
\end{align} 
\end{thm}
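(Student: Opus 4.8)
The plan is to apply the strong duality result of Theorem~\ref{thm:strong_duality} to the (measurable) integrand $f(\cdot) = R(x, \cdot)$ for a fixed $x \in \Xcal$, and then to evaluate the resulting dual program in closed form. First I would rewrite the regret in terms of the support function: since $\inf_{y \in \Xcal} w^\top y = -\sigma_{\Xcal}(-w)$, we have $R(x, w) = w^\top x + \sigma_{\Xcal}(-w) = \sup_{y \in \Xcal} w^\top (x - y)$. Before invoking Theorem~\ref{thm:strong_duality}, I would verify its integrability hypothesis $\mathds{E}_{P_0}[|R(x, \ran)|] < \infty$: compactness of $\Xcal$ gives the bound $|R(x, w)| \le \|w\|_* \max_{y \in \Xcal} \|x - y\|$, and since all norms on $\Rset^n$ are equivalent, $\mathds{E}_{P_0}[\|\ran\|] < \infty$ ensures $\mathds{E}_{P_0}[\|\ran\|_*] < \infty$, so the hypothesis holds.

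The core of the argument is to evaluate the inner supremum appearing in \eqref{eq:strong_duality}. Substituting the support-function form of $R$ and exchanging the two suprema (which is always valid, since $\sup_z \sup_y = \sup_y \sup_z$), the problem reduces to computing, for each fixed $y \in \Xcal$, the quantity $\sup_{z \in \Rset^n} \{ z^\top (x - y) - \lambda \|z - w\| \}$. Writing $z = w + u$ and recognizing the conjugate of a norm, this equals $w^\top (x - y)$ when $\|x - y\|_* \le \lambda$ and $+\infty$ otherwise. Adding the term $\lambda r$ and taking the supremum over $y \in \Xcal$, the inner expression collapses to $R(x, w) + \lambda r$ whenever $\lambda \ge \lambda^\ast := \sup_{v \in \Xcal} \|x - v\|_*$ (so that the constraint $\|x - y\|_* \le \lambda$ is slack on all of $\Xcal$), and to $+\infty$ whenever $\lambda < \lambda^\ast$. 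Here compactness of $\Xcal$ is used again to guarantee that $\lambda^\ast$ is finite and that the defining supremum is attained, so that a feasible point violating the constraint genuinely exists whenever $\lambda < \lambda^\ast$.

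It then remains to minimize the expectation over $\lambda \ge 0$. Since $\mathds{E}_{P_0}[\,\cdot\,]$ of the inner expression equals $+\infty$ for $\lambda < \lambda^\ast$ and equals the nondecreasing affine function $\mathds{E}_{P_0}[R(x, \ran)] + \lambda r$ for $\lambda \ge \lambda^\ast$ (using $r \ge 0$), the infimum is attained at $\lambda = \lambda^\ast$, which yields identity \eqref{eq:worst_case_expec_reform}. The boundary case $r = 0$ I would treat directly, since Theorem~\ref{thm:strong_duality} assumes $r > 0$: the Wasserstein ball then reduces to the singleton $\{P_0\}$, both sides equal $\mathds{E}_{P_0}[R(x, \ran)]$, and the identity is immediate. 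For the final claim on optimal solutions, I would expand $\mathds{E}_{P_0}[R(x, \ran)] = \mathds{E}_{P_0}[\ran^\top x] - \mathds{E}_{P_0}[\inf_{y \in \Xcal} \ran^\top y]$ and note that the subtracted term is a finite constant independent of $x$; deleting it leaves exactly the objective of \eqref{eq:DRRO_reduction} and does not alter the set of minimizers, so \eqref{eq:DRRO} and \eqref{eq:DRRO_reduction} have identical optimal solutions.

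The step I expect to be the main obstacle is the closed-form evaluation and interchange of suprema in the second paragraph---in particular, establishing the $\{0, +\infty\}$ dichotomy for the norm conjugate and pinning down the exact threshold $\lambda^\ast$ at which the dual objective jumps from $+\infty$ to finite. Throughout, compactness of $\Xcal$ is the structural hypothesis doing the real work: it simultaneously secures the integrability needed to invoke Theorem~\ref{thm:strong_duality}, the finiteness and attainment of $\lambda^\ast$, and the finiteness of the constant $\mathds{E}_{P_0}[\inf_{y \in \Xcal} \ran^\top y]$, so I would be careful to invoke it at each of these points.
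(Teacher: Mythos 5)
Your proposal is correct and follows essentially the same route as the paper's own proof: check integrability via compactness of $\Xcal$, invoke Theorem \ref{thm:strong_duality}, exchange the suprema over $z$ and $y$, use the conjugate of $\lambda\|\cdot\|$ to obtain the $\{0,+\infty\}$ dichotomy, identify the optimal multiplier $\lambda^\star = \sup_{v \in \Xcal}\|x-v\|_*$, and drop the $x$-independent constant $\mathds{E}_{P_0}\big[\inf_{y \in \Xcal} \ran^\top y\big]$ to pass from \eqref{eq:worst_case_expec_reform} to the equivalence of \eqref{eq:DRRO} and \eqref{eq:DRRO_reduction}. The only cosmetic difference is your integrability bound, which pairs $\|w\|_*$ with $\max_{y \in \Xcal}\|x-y\|$ and so needs finite-dimensional norm equivalence, whereas the paper pairs $\|\ran\|$ with $\sup_{y \in \Xcal}\|x-y\|_*$ directly.
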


%%%%%%% PROOF of main result %%%%%%%%
    \begin{proof}  When $r = 0$, the result is immediate, as the ambiguity set reduces to a singleton containing only the nominal distribution $P_0$.  For $r > 0$, we begin by proving the identity stated in Equation \eqref{eq:worst_case_expec_reform}.
    First, observe that for each $x \in \Xcal$,  the regret function $R(x, w) = w^\top x - \inf_{y \in \Xcal} w^\top y $ is convex in $w$ over $\Rset^n$ and, as a result, is Borel measurable.  Also, since it is assumed that  $P_0 \in \Mcal(\Rset^n)$ and that the feasible set  $\Xcal$ is bounded, it holds that $\mathds{E}_{P_0}[|R(x, \, \ran) |] \,  \leq \,   \mathds{E}_{P_0}[\|\ran\|] \sup_{y \in \Xcal} \|x - y \|_* \, <  \, \infty$ for all $x \in \Xcal$. Given these properties, Theorem \ref{thm:strong_duality} can be applied to express the worst-case expected regret as
\begin{align}  \label{eq:a1}
         \sup_{P \in \Pcal}  \, \mathds{E}_P\left[{ R(x, \, \ran)} \right] \,  & = \,  \inf_{\lambda \geq 0} \mathds{E}_{P_0}  \left[ \sup_{z \in \Rset^n}  \big\{  R(x, \, z)  + \lambda (r - \| z - \ran\|)    \big\}    \right]. 
\end{align}
Substituting $R(x, \, z) := z^\top x - \inf_{y \in \Xcal} \{z^\top y\}$ into the right-hand side of Eq. \eqref{eq:a1} and rearranging terms, the worst-case expected regret can be rewritten as
\begin{align} \label{eq:a2}
    \inf_{\lambda \geq 0}   \bigg\{   \lambda r    +  \mathds{E}_{P_0}  \bigg[ \sup_{z \in \Rset^n} \sup_{y \in \Xcal} \big\{  z^\top (x-y)   - \lambda  \| z - \ran\|    \big\}    \bigg]    \bigg\}.
\end{align}
Next, we exchange the order of the supremum taken with respect to $z \in \Rset^n$ with the supremum taken with respect to $y \in \Xcal$ in \eqref{eq:a2}. Then, using the  fact that 
the conjugate function of  $\lambda \|\cdot \|$ is given by
\begin{align*}
    \sup_{z \in \Rset^n}  \big\{ z^\top\xi  - \lambda \|z\| \big\} = \begin{cases}
        0 & \text{if } \|\xi \|_* \leq \lambda, \\ \infty &  \text{otherwise},
    \end{cases}
\end{align*}
for all $\xi \in \Rset^n$, we can simplify the optimization problem in \eqref{eq:a2} to
\begin{align} \label{eq:a3}
  \inf_{\lambda \geq 0}   \Bigg\{   \lambda r  \  +  \ \mathds{E}_{P_0}  \Bigg[ \sup_{y \in \Xcal}    \begin{cases} \ran^\top (x - y)   &\text{if } \|x - y \|_* \leq \lambda, \\ \infty & \text{otherwise}
  \end{cases}  \Bigg\}    \Bigg]    \Bigg\}.
\end{align}
 Clearly, the value of the expectation in \eqref{eq:a3} is infinite if there exists any element $v \in \Xcal$ such that $\| x- v\|_* > \lambda$. Hence, the optimization problem \eqref{eq:a3} can be rewritten as
 \begin{align} \label{eq:a4}
    \inf_{\lambda \geq 0}   \bigg\{   \lambda r   \  +  \ \mathds{E}_{P_0}  \bigg[ \sup_{y \in \Xcal}    \big\{  \ran^\top (x-y)    \big\}    \bigg]  \ \bigg| \   \|x - v\|_* \leq \lambda  \, \text{ for all }  \, v \in \Xcal  \bigg\}.    
 \end{align}
From \eqref{eq:a4}, the optimal dual variable  is found to be $ \lambda^\star = \sup_{v \in \Xcal} \| x - v\|_* .$ Thus, the optimal value of problem \eqref{eq:a4} is given by
\begin{align} \label{eq:a5}
    r\sup_{v \in \Xcal} \| x - v\|_*   \ +  \ \mathds{E}_{P_0}  \bigg[ \sup_{y \in \Xcal}    \big\{  \ran^\top (x-y)    \big\}    \bigg] ,
\end{align}
where the second term in \eqref{eq:a5} is $\mathds{E}_{P_0}[R(x, \, \ran) ]$, proving the equivalence in \eqref{eq:worst_case_expec_reform}.  Finally,  the equivalence  of the optimal solution sets for problems  \eqref{eq:DRRO}  and \eqref{eq:DRRO_reduction} is a direct consequence of   \eqref{eq:worst_case_expec_reform}, which implies that
\begin{align}
    \inf_{x \in \Xcal} \sup_{P \in \Pcal}  \, \mathds{E}_P\left[{ R(x, \, \ran)} \right] \,  = \, \inf_{x \in \Xcal}  \Big\{ \mathds{E}_{P_0}[\ran^\top x]   \ + \ r \sup_{v \in \Xcal} \|x - v\|_*   \Big\}   \ - \ \mathds{E}_{P_0} \Big[ \inf_{y \in \Xcal} \ran^\top y  \Big],
\end{align}
thus completing the proof. \end{proof}

\begin{rem}[Regularization] \rm 
Theorem \ref{thm:DRRO} shows that the worst-case expected regret incurred by a decision $x \in \Xcal$ over all distributions in the type-1 Wasserstein ball can be expressed as the sum of two terms: the expected regret 
of the decision $x$ under the nominal distribution and a regularization term that 
penalizes the distance of the decision $x$ to the farthest point in the feasible set $\Xcal$. 
Specifically, the regularization function $\sup_{v \in \Xcal} \| x- v \|_*$ represents the radius of the smallest dual-norm ball centered at $x$ that contains the feasible set $\Xcal$.
Thus, as the radius $r$ of the ambiguity set $\Pcal$ increases, the set of optimal solutions to the distributionally robust regret minimization problem \eqref{eq:DRRO} will gravitate towards the ``center'' of the feasible set $\Xcal$,  defined by $\argmin_{x \in \Xcal} \sup_{v \in \Xcal} \| x-v\|_*$.\footnote{While this notion of a ``central point'' in the feasible set $\Xcal$ is  similar to the Chebyshev center of the set $\Xcal$, which is typically defined as the center of the smallest ball that contains the set $\Xcal$, it differs in that it requires the center of the enclosing ball to lie within the set $\Xcal$. In contrast, the Chebyshev center may not belong to $\Xcal$, even if $\Xcal$ is a convex set.} 

The regularization function $\sup_{v \in \Xcal} \| x - v \|_*$ can also be shown to coincide with the Lipschtiz modulus of the regret function $R(x, \cdot)$ on $\Rset^n$ with respect to the norm $\| \cdot\|$. ``Lipschitz regularization'' has also been shown to arise in distributionally robust optimization problems equipped with type-1 Wasserstein ambiguity sets centered at discrete (empirical) nominal distributions \cite[Theorem 6.3]{mohajerin2018data}, \cite[Theorem 10]{kuhn2019wasserstein}.
\end{rem}

\begin{rem}[Interpolation] \rm 
Interestingly, the regularization function can also be interpreted as the worst-case regret incurred by a decision $x \in \Xcal$, where the supremum is taken over all cost vectors $w$ within a norm-bounded uncertainty set $\uset = \{ w \in \Rset^n \, | \, \|w\| \leq 1\}$.  Specifically, we have that
\begin{align*}
    \sup_{\|\rel\| \leq 1} R(x, \, \rel) \,  =  \, \sup_{\|\rel\| \leq 1}\,  \sup_{y \in \Xcal}  \big\{ \rel^\top (x - y)  \big\} \,  =  \, \sup_{y \in \Xcal} \| x - y\|_*,
\end{align*}
where the last equality is obtained by exchanging the order of the supremum taken with respect to $w$ and $y$, and using the fact that $\sup_{\|w\| \leq 1} w^\top(x-y) = \|x-y\|_*$. With this interpretation,  problem \eqref{eq:DRRO_reduction} can be rewritten as
   \begin{align} \label{eq:interpolate}
\inf_{x \in \Xcal}  \Big\{ \mathds{E}_{P_0}[\ran^\top x]   \ + \ r    \sup_{\|\rel\| \leq 1} R(x, \, \rel)   \Big\}.
\end{align}
Thus, the Wasserstein distributionally robust regret minimization problem can be understood as ``interpolating'' between the  nominal expected cost minimization problem $\inf_{x \in \Xcal} \mathds{E}_{P_0}[ \ran^\top x]$  (when $r \rightarrow 0$) and the robust regret minimization problem  $\inf_{x \in \Xcal} \, \sup_{\|\rel\|  \leq 1 }  \ R(x,\, \rel)$  (when $r \rightarrow \infty$).
The radius of the ambiguity set $r$, which reflects the decision maker's confidence in the accuracy of the nominal distribution, acts as the regularization parameter that governs this tradeoff.

\end{rem}

%%%%%%%%%%%%%%%%%%%%%%%%%%%%%%%%%%%%%%%%%%%%%%%%%%%%%%%
\subsection{Complexity and Tractable Reformulation} \label{sec:tractable}
 The evaluation of the regularization function $ \sup_{v \in \Xcal} \| x - v\|_*$ in problem \eqref{eq:DRRO_reduction} amounts to solving a  norm maximization problem over the feasible set $\Xcal$.  Such problems are known to be NP-hard when $\|\cdot\|_*$ is a $p$-norm with $p \in [1, \infty)$ and $\Xcal$ is a compact, convex polytope in halfspace representation \cite[Theorem 4.1]{mangasarian1986variable}.   
 However, if the feasible set can be expressed as the convex hull of a finite set of points  given by $\Xcal = \text{conv}\{ v_1, \dots, v_m \}$,  then the norm maximization problem takes on a particularly simple form:
 $$\sup_{v \in \Xcal} \| x - v\|_* \, = \, \max_{i \in \{1, \dots, m\}} \| x - v_i\|_*.$$ 
This follows from the fact that the supremum of a convex function over a convex and compact set is attained at an extreme point of that set. With this simplification,   the distributionally robust regret minimization problem \eqref{eq:DRRO_reduction} can be reformulated as the following tractable convex  program in the variables  $x$ and  $\lambda $:
\begin{align} \label{eq:Vrep_reform}
\begin{aligned}
  &\text{minimize}    & & \mathds{E}_{P_0}[\ran^\top x]   \ + \ r\lambda \\
        & \text{subject to}  & &   x \in \Xcal,  \ \lambda  \in \Rset  \\
    & & & \|x - v_i\|_* \leq \lambda,  \ \ i = 1, \dots,m.
\end{aligned}
\end{align}

 As another example,  consider the case where  $\|\cdot\|_*$ is the $\infty$-norm. In this setting, the norm maximization  problem $ \sup_{v \in \Xcal} \| x - v\|_\infty$ can be solved by  maximizing the absolute value of each component of the vector $x-v$ separately, subject to the constraint $v \in \Xcal$. Specifically, given a (possibly nonconvex) feasible set $\Xcal \subseteq \Rset^n$, we have
 \begin{align} \label{eq:reg_reform}
    \sup_{v \in \Xcal} \| x - v\|_\infty \, = \, \max_{i \in \{1, \dots,n\}} \max \, \Big\{ \sup_{v \in \Xcal} e_i^\top (x-v), \   \sup_{v \in \Xcal} e_i^\top (v-x)  \Big\},
\end{align}
where the vectors $e_1, \dots, e_n \in \Rset^n$ denote the standard basis for $\Rset^n$.  
This suggests that when the Wasserstein distance is defined in terms of the 1-norm 
(the dual of the $\infty$-norm),  the distributionally robust regret minimization problem \eqref{eq:DRRO_reduction} can be reformulated as shown in the following corollary, which follows directly from  Theorem \ref{thm:DRRO} and the simplified form of the regularization function in Eq. \eqref{eq:reg_reform}.

\begin{cor} \label{cor:infty} \rm  Let $r \geq 0$  and assume that the set $\Xcal \subseteq \Rset^n$  is nonempty and compact. If $\|\cdot\|_*$ is the $\infty$-norm, then problem \eqref{eq:DRRO_reduction} is equivalent to the following  optimization problem in the variables $x$ and $\lambda$:
\begin{align}  \label{eq:support_reform}
\begin{aligned}
    &\text{minimize}    & & \mathds{E}_{P_0}[\ran^\top x]   \ + \ r\lambda \\
    %     & \text{subject to}  & & (\lambda, x, \theta) \in \Rset_+ \times \Rset^n \times \Rset_+^m \\
        & \text{subject to}  & &   x \in \Xcal,  \ \lambda  \in \Rset  \\
    & & &     \supp_\Xcal(e_i) - e_i^\top x   \, \leq \,   \lambda,  \ \  i = 1, \dots,n\\
    &&&   \supp_\Xcal(-e_i) + e_i^\top x   \, \leq \,   \lambda,  \ \  i = 1, \dots,n.
\end{aligned}
\end{align}
Here,  $\supp_\Xcal$ denotes the support function associated with the set $\Xcal$.
\end{cor}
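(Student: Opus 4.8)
The plan is to obtain \eqref{eq:support_reform} from \eqref{eq:DRRO_reduction} by a standard epigraph reformulation of the regularization term, using the componentwise decomposition of the $\infty$-norm maximization already recorded in \eqref{eq:reg_reform}. By Theorem \ref{thm:DRRO}, problem \eqref{eq:DRRO} is equivalent to \eqref{eq:DRRO_reduction}, so it suffices to show that \eqref{eq:DRRO_reduction} and \eqref{eq:support_reform} share the same optimal value and the same set of minimizing decisions $x$. Compactness of $\Xcal$ guarantees that $\supp_\Xcal(\pm e_i)$ is finite for each $i$, so every quantity appearing below is a well-defined real number and the supremum defining the regularizer is attained.

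First I would rewrite each inner maximization in \eqref{eq:reg_reform} in terms of the support function. Using $\supp_\Xcal(y) = \sup_{v \in \Xcal} v^\top y$, I have $\sup_{v \in \Xcal} e_i^\top(v-x) = \supp_\Xcal(e_i) - e_i^\top x$ and $\sup_{v \in \Xcal} e_i^\top(x-v) = e_i^\top x - \inf_{v \in \Xcal} e_i^\top v = e_i^\top x + \supp_\Xcal(-e_i)$. Substituting these into \eqref{eq:reg_reform} yields
\[
    \sup_{v \in \Xcal} \|x - v\|_\infty = \max_{i \in \{1,\dots,n\}} \max\big\{ \supp_\Xcal(e_i) - e_i^\top x, \ \supp_\Xcal(-e_i) + e_i^\top x \big\}.
\]
The $2n$ constraints in \eqref{eq:support_reform} are precisely the statement that $\lambda$ is an upper bound on each of the $2n$ affine terms inside this maximum; hence, for a fixed $x \in \Xcal$, the smallest feasible $\lambda$ equals exactly $\sup_{v \in \Xcal} \|x - v\|_\infty$.

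Finally I would close the equivalence by eliminating $\lambda$. Since $r \geq 0$, the objective $\mathds{E}_{P_0}[\ran^\top x] + r\lambda$ is nondecreasing in $\lambda$, so minimizing over $\lambda$ subject to the constraints sets $\lambda^\star(x) = \sup_{v \in \Xcal} \|x - v\|_\infty$, and substituting this value recovers the objective of \eqref{eq:DRRO_reduction} exactly; thus the two problems have identical optimal values and the same optimal $x$. The only points that require a moment of care---and the closest thing to an obstacle---are the degenerate case $r = 0$, where the $\lambda$ term vanishes and both problems collapse to $\inf_{x \in \Xcal} \mathds{E}_{P_0}[\ran^\top x]$ (with $\lambda$ in \eqref{eq:support_reform} merely required to be feasible, which compactness guarantees), and the justification that the supremum over $v$ commutes with the finite maxima over the coordinate index $i$ and the two sign cases---which is immediate since a supremum exchanges freely with a maximum over a finite index set. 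Neither introduces any genuine difficulty.
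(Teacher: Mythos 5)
Your proof is correct and follows exactly the route the paper intends: the paper states that Corollary \ref{cor:infty} ``follows directly from Theorem \ref{thm:DRRO} and the simplified form of the regularization function in Eq.~\eqref{eq:reg_reform},'' and your argument simply makes that explicit via the support-function identities $\sup_{v \in \Xcal} e_i^\top(v-x) = \supp_\Xcal(e_i) - e_i^\top x$, $\sup_{v \in \Xcal} e_i^\top(x-v) = \supp_\Xcal(-e_i) + e_i^\top x$, followed by the standard epigraph reformulation and elimination of $\lambda$. Your handling of the $r=0$ case and the exchange of suprema with finite maxima are fine details the paper leaves implicit, so nothing is missing.
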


While problem \eqref{eq:support_reform} is a valid  reformulation of the distributionally robust regret minimization problem \eqref{eq:DRRO_reduction} for any nonempty, compact set $\Xcal$,
the  computational tractability of problem \eqref{eq:support_reform}  depends on the underlying geometry of the feasible set $\Xcal$. For example, if  $\Xcal$ is a convex set, then problem \eqref{eq:support_reform} reduces to a convex optimization problem, where  the support function values $ \supp_\Xcal( \pm e_i) $ for  $i = 1, \dots, n$ can be computed by solving $2n$ convex optimization problems, either numerically or in closed form in certain cases.  See \cite[Section 3]{ben2015deriving}  for an overview of a variety of convex sets whose support functions  can be determined in closed form or recast as minimization problems using strong duality.

%%%%%%%%%%%%%%%%%%%%%%%%%%%%%%%%%%%%%%%%%%%%%%%%%%%%%%%
\subsection{Comparison to Distributionally Robust Optimization} \label{sec:comparison}

In this section, we compare the distributionally robust regret minimization approach with the more widely studied  distributionally robust expected cost minimimization framework---commonly referred to as distributionally robust optimization (DRO)---where the decision-maker seeks to minimize the worst-case expected cost across all distributions within a specified ambiguity set by solving: 
\begin{align} \label{eq:DR0}
        \inf_{x \in \Xcal} \, \sup_{P \in \Pcal}  \, \mathds{E}_P\big[ \ran^\top x \big].
    \end{align}
While both methods aim to enhance the robustness of decisions to distributional uncertainty, they induce distinct forms of regularization, leading to  fundamental differences in the behavior of their optimal solutions as the Wasserstein radius $r$ varies.  To illustrate this difference more precisely, note that the worst-case expected cost incurred by a decision $x \in \Xcal$ can be expressed as the sum of the expected cost under the nominal distribution and a regularization term that penalizes the dual norm of $x$, scaled by the radius of the ambiguity set $r$: 
\begin{align} \label{eq:DRO_reform}
    \sup_{P \in \Pcal}  \, \mathds{E}_P\big[ \ran^\top x \big] \,  = \, \mathds{E}_{P_0}[\ran^\top x  ] \, + \,  r  \|  x \|_* .
\end{align}
The equivalence in \eqref{eq:DRO_reform} was originally shown to hold for type-1 Wasserstein ambiguity sets centered at empirical nominal distributions \cite{mohajerin2018data, kuhn2019wasserstein}. By applying Theorem \ref{thm:strong_duality}, the reformulation  in  \eqref{eq:DRO_reform}  can also be shown to hold for any  nominal distribution $P_0 \in \Mcal(\Rset^n)$. We omit the proof here, as it closely follows the proof of Theorem \ref{thm:DRRO}.

Eq. \eqref{eq:DRO_reform} reveals that distributionally robust expected cost minimization results in a form of regularization that encourages decisions that are small in dual norm, driving the set of optimal solutions toward the origin as the ambiguity set radius $r$ increases. In contrast, the regularizer induced by the distributionally robust expected regret minimization approach  draws the set of optimal solutions toward the ``center'' of the feasible region as $r$ increases. Figure \ref{fig:simple_example}  illustrates these contrasting  behaviors with a simple two-dimensional example.

\begin{figure}[htb!]
    \centering
    \includegraphics[width=.7\columnwidth]{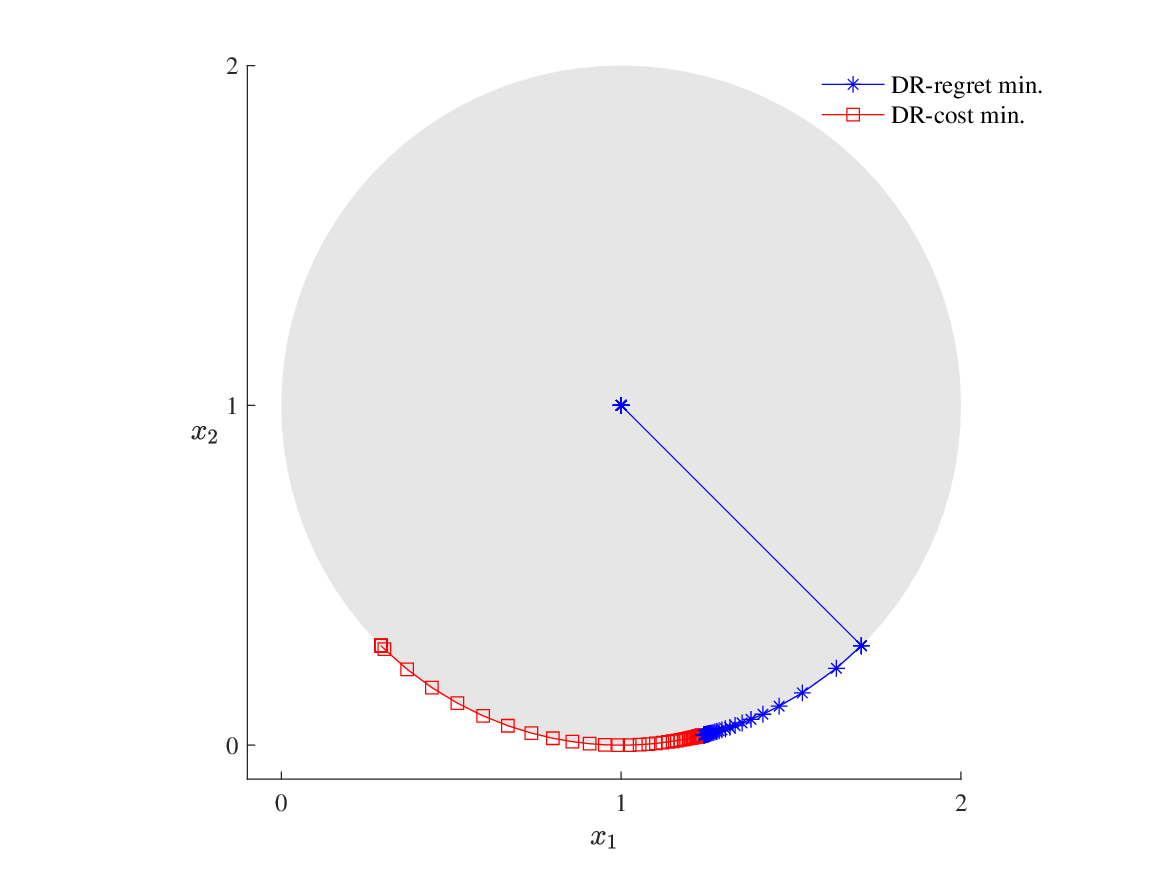}
    \caption{Illustration of optimal solutions to the  distributionally robust regret minimization problem \eqref{eq:DRRO} (blue stars) and the distributionally robust cost minimization problem \eqref{eq:DR0} (red squares)   as the ambiguity set radius $r$ ranges from $10^{-2}$ to 10 over 50 logarithmically spaced points. In this example, the nominal expected value of $\ran$ is taken to be $\mathds{E}_{P_0} [ \ran] = [-1/2, \, 2]^\top$, the dual norm $\|\cdot\|_*$ is the $\infty$-norm, and the feasible set $\Xcal = \{x \in \Rset^2 \, | \, \| x-1 \|_2 \leq 1\}$ is shown as the shaded gray region.}
    \label{fig:simple_example}
\end{figure}

%%%%%%%%%%%%%%%%%%%%%%%%%%%%%%%%%%%%%%%
\section{Worst-Case Conditional Value-at-Risk of Regret Minimization}  \label{sec:cvar}
Drawing inspiration from the decision criterion proposed in \cite{natarajan2014probabilistic},  we  now consider a risk-sensitive variant of problem \eqref{eq:DRRO}, where decisions are made to minimize the worst-case conditional value-at-risk (CVaR) of regret over the type-1 Wasserstein ball of distributions. Specifically, given a confidence level $\alpha \in [0,1)$, the conditional value-at-risk of regret incurred by a decision $x \in \Xcal$ under a distribution $P \in \Pcal$ is defined as 
\begin{align} \label{eq:cvar_regret}
    \text{CVaR}_{\alpha}^P\big(R(x, \, \ran)\big) :=  \inf_{\tau \in \Rset} \left\{\tau \, + \,  \frac{1}{1-\alpha}   \mathds{E}_P\big[   \max\{ R(x, \, \ran) - \tau, \, 0 \} \big] \right\}.
\end{align}
As a coherent risk measure \cite{artzner1999coherent}, CVaR is guaranteed to be convexity preserving.  This ensures convexity of the  (worst-case) CVaR of regret with respect to the decision variable $x$, since the regret function $R(x, \, w)$ is  linear in $x$ for each $w \in \Rset^n$.
Utilizing this class of risk measures, we formulate the worst-case CVaR of regret minimization problem as
\begin{align} \label{eq:WCVAR_min}
       \inf_{x \in \Xcal} \, \sup_{P \in \Pcal}  \,     \text{CVaR}_{\alpha}^P\big(R(x, \, \ran)\big).
\end{align}
As the confidence level  $\alpha \uparrow 1$, the CVaR of regret, $\text{CVaR}_\alpha^P\big(R(x, \, \ran)\big)$, converges to the essential supremum  of regret, defined as $ \operatorname{ess}\sup_P  \big( R(x, \, \ran) \big) :=  \inf_{\tau \in \Rset}\{ \tau \, | \, P(R(x, \, \ran) > \tau) =0 \}$.  When $\alpha = 0$, it coincides with the expected regret $\mathds{E}_P[R(x, \, \ran)]$, 
and problem \eqref{eq:WCVAR_min}  reduces to the  worst-case expected regret minimization problem \eqref{eq:DRRO}. The following result extends Theorem \ref{thm:DRRO} to encompass this more general family of risk-sensitive regret minimization problems.

\begin{thm}[Worst-Case CVaR of Regret] \label{thm:CVaR} \rm Let  $r \geq 0$ and assume that the feasible set $\Xcal \subset \Rset^n$ is nonempty and compact. Given a confidence level $\alpha \in [0, 1)$, the worst-case conditional value-at-risk of regret incurred by a decision $x \in \Xcal$ can be expressed as:
\begin{align} \label{eq:worst_case_CVaR}
    \sup_{P \in \Pcal}  \,     \text{CVaR}_{\alpha}^P\big(R(x, \, \ran)\big) \,  = \,\text{CVaR}_{\alpha}^{P_0}\big(R(x, \, \ran)\big)  \, + \,  \Big( \frac{r}{1- \alpha} \Big) \sup_{v \in \Xcal} \|  x - v \|_* .
\end{align}
\end{thm}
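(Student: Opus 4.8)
The plan is to use the Rockafellar--Uryasev representation \eqref{eq:cvar_regret} of CVaR to write the worst-case CVaR as a $\sup_P\inf_\tau$, interchange the two operators, and then re-run the duality-plus-Lipschitz argument that drove Theorem~\ref{thm:DRRO}, now applied to the truncated regret $\phi_\tau(w):=\max\{R(x,w)-\tau,0\}$. Substituting \eqref{eq:cvar_regret} gives
\[
\sup_{P\in\Pcal}\text{CVaR}_\alpha^P\big(R(x,\ran)\big)=\sup_{P\in\Pcal}\inf_{\tau\in\Rset}\Big\{\tau+\tfrac{1}{1-\alpha}\mathds{E}_P[\phi_\tau(\ran)]\Big\}.
\]
If the $\sup_P$ and $\inf_\tau$ can be exchanged, then for each fixed $\tau$ the inner worst-case expectation $\sup_{P\in\Pcal}\mathds{E}_P[\phi_\tau(\ran)]$ is exactly of the type handled by Theorem~\ref{thm:strong_duality}, and I expect it to evaluate to $\mathds{E}_{P_0}[\phi_\tau(\ran)]+r\sup_{v\in\Xcal}\|x-v\|_*$; pulling this constant out of the remaining $\inf_\tau$ and recognizing what is left as $\text{CVaR}_\alpha^{P_0}$ would then yield \eqref{eq:worst_case_CVaR}.

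The main obstacle is justifying the interchange of $\sup_P$ and $\inf_\tau$. The integrand $g(P,\tau):=\tau+\tfrac{1}{1-\alpha}\mathds{E}_P[\phi_\tau(\ran)]$ is linear (hence concave) in $P$ and convex in $\tau$, but $\tau$ ranges over all of $\Rset$, so I first restrict it to a compact interval. Since $R(x,\cdot)\ge0$, the inner minimizer over $\tau$ is the (lower) $\alpha$-quantile of $R(x,\ran)$, which is nonnegative; and by Markov's inequality together with the bound $\mathds{E}_P[R(x,\ran)]\le\mathds{E}_{P_0}[R(x,\ran)]+r\sup_{v\in\Xcal}\|x-v\|_*$ implied by \eqref{eq:worst_case_expec_reform}, this quantile is bounded above by $\bar\tau:=(1-\alpha)^{-1}\big(\mathds{E}_{P_0}[R(x,\ran)]+r\sup_{v\in\Xcal}\|x-v\|_*\big)$ uniformly over $P\in\Pcal$. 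Hence $\inf_{\tau\in\Rset}g(P,\tau)=\inf_{\tau\in[0,\bar\tau]}g(P,\tau)$ for every $P\in\Pcal$. With $\tau$ confined to the compact convex set $[0,\bar\tau]$, on which $g(P,\cdot)$ is convex and continuous, and with $g(\cdot,\tau)$ concave on the convex set $\Pcal$, a minimax theorem (e.g.\ Ky Fan's, which requires compactness and convexity only on the side of the minimizing variable) yields $\sup_{P\in\Pcal}\inf_{\tau\in[0,\bar\tau]}g=\inf_{\tau\in[0,\bar\tau]}\sup_{P\in\Pcal}g$.

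It then remains to evaluate the inner worst-case expectation for each fixed $\tau$. I would mirror the proof of Theorem~\ref{thm:DRRO}, using that $\phi_\tau=\max\{R(x,\cdot)-\tau,0\}$ is the composition of the $1$-Lipschitz, nondecreasing map $t\mapsto\max\{t-\tau,0\}$ with the regret $R(x,\cdot)$, which is Lipschitz of modulus $\sup_{v\in\Xcal}\|x-v\|_*$ with respect to $\|\cdot\|$. Consequently $\phi_\tau$ shares this Lipschitz modulus, so for $\lambda\ge\sup_{v\in\Xcal}\|x-v\|_*$ the inner supremum $\sup_{z}\{\phi_\tau(z)-\lambda\|z-\ran\|\}$ in \eqref{eq:strong_duality} is attained at $z=\ran$ and equals $\phi_\tau(\ran)$; and because $R(x,\cdot)$, hence $\phi_\tau$ past the truncation threshold, grows at the exact rate $\sup_{v\in\Xcal}\|x-v\|_*$ along the direction achieving the dual norm, this supremum is $+\infty$ whenever $\lambda<\sup_{v\in\Xcal}\|x-v\|_*$. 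The optimal multiplier is therefore $\lambda^\star=\sup_{v\in\Xcal}\|x-v\|_*$, giving $\sup_{P\in\Pcal}\mathds{E}_P[\phi_\tau(\ran)]=\mathds{E}_{P_0}[\phi_\tau(\ran)]+r\sup_{v\in\Xcal}\|x-v\|_*$, exactly as in Theorem~\ref{thm:DRRO}.

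Finally, I would substitute this back and separate the now-constant regularization term:
\[
\inf_{\tau\in[0,\bar\tau]}\Big\{\tau+\tfrac{1}{1-\alpha}\big(\mathds{E}_{P_0}[\phi_\tau(\ran)]+r\sup_{v\in\Xcal}\|x-v\|_*\big)\Big\}=\inf_{\tau\in[0,\bar\tau]}\Big\{\tau+\tfrac{1}{1-\alpha}\mathds{E}_{P_0}[\phi_\tau(\ran)]\Big\}+\tfrac{r}{1-\alpha}\sup_{v\in\Xcal}\|x-v\|_*.
\]
Since $\bar\tau$ also bounds the optimal $\tau$ for the nominal problem, the remaining infimum over $[0,\bar\tau]$ coincides with the unconstrained infimum over $\Rset$, which is precisely $\text{CVaR}_\alpha^{P_0}(R(x,\ran))$ by \eqref{eq:cvar_regret}; this establishes \eqref{eq:worst_case_CVaR}. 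I expect the minimax interchange to be the crux, while the inner evaluation is a routine adaptation of Theorem~\ref{thm:DRRO}; the degenerate cases $r=0$ and $\alpha=0$ reduce to earlier statements and can be verified directly.
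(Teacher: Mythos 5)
Your proposal is correct, and its overall skeleton matches the paper's proof: Rockafellar--Uryasev representation, interchange of $\sup_{P}$ and $\inf_{\tau}$, then Theorem~\ref{thm:strong_duality} applied to the truncated regret $\phi_\tau(w):=\max\{R(x,w)-\tau,0\}$, whose worst-case expectation picks up exactly the additive term $r\sup_{v\in\Xcal}\|x-v\|_*$. The genuine difference lies in how the interchange is justified, and it is worth recording. The paper keeps $\tau$ ranging over all of $\Rset$ and places the topological burden on the ambiguity set: it invokes weak compactness of the type-1 Wasserstein ball \cite{yue2022linear}, asserts weak continuity of $P\mapsto \mathds{E}_P[\phi_\tau(\ran)]$ on $\Pcal$, and applies the minimax theorem of \cite{borwein2016variational}. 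You instead compactify the $\tau$-side: nonnegativity of regret, Markov's inequality, and the already-proved identity \eqref{eq:worst_case_expec_reform} bound the $\alpha$-quantiles of $R(x,\ran)$ uniformly over $P\in\Pcal$, so the inner infimum can be restricted to $[0,\bar\tau]$, after which Ky Fan's minimax theorem applies with compactness and lower semicontinuity required only in $\tau$ and mere concavelike structure (here linearity, plus convexity of $\Pcal$) in $P$---no topology on the space of measures is needed. This buys real robustness: since $\phi_\tau$ has linear growth, $P\mapsto\mathds{E}_P[\phi_\tau(\ran)]$ is weakly lower semicontinuous but \emph{not} weakly continuous on $\Pcal$ (e.g., $P_n=(1-\tfrac1n)\delta_0+\tfrac1n\delta_{nd}$ stays in a ball around $\delta_0$, converges weakly to $\delta_0$, yet its expectation of $\phi_\tau$ does not converge to $\phi_\tau(0)$), so your route sidesteps the most delicate point of the paper's argument, at the modest cost of the quantile-bounding step and of using Theorem~\ref{thm:DRRO} as a lemma. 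Your evaluation of the inner worst-case expectation via the Lipschitz modulus of $\phi_\tau$ ($\lambda^\star=\sup_{v\in\Xcal}\|x-v\|_*$, with the dual supremum attained at $z=\ran$ above this threshold and $+\infty$ below it) is the same computation the paper carries out through conjugacy in Theorem~\ref{thm:DRRO}, merely packaged differently.
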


%%%%%%% PROOF of CVaR result %%%%%%%%
\begin{proof}  When $r = 0$, the result is immediate, as the ambiguity set reduces to a singleton containing only the nominal distribution $P_0$.  For $r > 0$, recall that 
\begin{align} \label{eq:z1}
    \sup_{P \in \Pcal}  \,     \text{CVaR}_{\alpha}^P\big(R(x, \, \ran)\big) \,  = \,     \sup_{P \in \Pcal}   \, \inf_{\tau \in \Rset} \left\{\tau \, + \,  \frac{1}{1-\alpha}   \mathds{E}_P\big[   \max\{ R(x, \, \ran) - \tau, \, 0 \} \big] \right\}.
\end{align}
Next, we next invoke a version of the minimax theorem to exchange the order of the infimum and supremum in \eqref{eq:z1}.  To do so, note that the objective function in \eqref{eq:z1} is convex in the variable $\tau \in \Rset$ and  linear in the probability measure $P$ over the type-1 Wasserstein ball $\Pcal$, which is a nonempty, convex, and weakly compact set \cite[Theorem 1]{yue2022linear}. 
The objective function can also be shown to be weakly continuous in $P$ on the set $\Pcal$.  Given these properties, the minimax theorem from \cite[Theorem 4.5]{borwein2016variational} can be applied to reformulate the worst-case CVaR of regret in \eqref{eq:z1} as
\begin{align} \label{eq:z2}
    \sup_{P \in \Pcal}  \,     \text{CVaR}_{\alpha}^P\big(R(x, \, \ran)\big) \,  =   \, \inf_{\tau \in \Rset} \left\{\tau \, + \,  \frac{1}{1-\alpha}   \sup_{P \in \Pcal} \,  \mathds{E}_P\big[   \max\{ R(x, \, \ran) - \tau, \, 0 \} \big] \right\}.
\end{align}
To complete the proof, one can employ Theorem \ref{thm:strong_duality} to reformulate the worst-case expectation on the right-hand side of Eq. \eqref{eq:z2} in its dual form, and follow 
a  line of reasoning similar to that used in the proof of Theorem  \ref{thm:DRRO} to show that
\begin{align*}
\sup_{P \in \Pcal} \,  \mathds{E}_P\big[   \max\{ R(x, \, \ran) - \tau, \, 0 \} \big]  \, = \, \mathds{E}_{P_0}\big[   \max\{ R(x, \, \ran) - \tau, \, 0 \} \big] \, + \,  r \sup_{v \in \Xcal} \|  x - v \|_*.
\end{align*}
Substituting the above expression into Eq. \eqref{eq:z2} and rearranging terms yields the desired result. \end{proof}
Theorem \ref{thm:CVaR} shows that for $\alpha \in [0,1)$, the worst-case CVaR of regret minimization problem \eqref{eq:WCVAR_min} can be reformulated as
\begin{align} \label{eq:CVaR_problem_reform}
    \inf_{x \in \Xcal} \,  \Big\{ \text{CVaR}_{\alpha}^{P_0}\big(R(x, \, \ran)\big)  \, + \,  \Big( \frac{r}{1- \alpha} \Big) \sup_{v \in \Xcal} \|  x - v \|_* \Big\}.
\end{align}
This reformulation  possesses  the same form of regularization identified in Theorem \ref{thm:DRRO}, but with a rescaling of the regularization parameter by a factor of $1/(1- \alpha)$, reflecting a dependence on the confidence level $\alpha$.

\begin{rem}[Computational tractability] \rm 
As expected,  the reformulation \eqref{eq:CVaR_problem_reform} reduces to the one provided in Theorem \ref{thm:DRRO} when $\alpha = 0$.  To solve problem \eqref{eq:CVaR_problem_reform} for $\alpha \in (0,1)$, one must calculate the CVaR of regret under the nominal distribution $P_0$. However, despite their convexity, computing  CVaR measures with high accuracy is known to be challenging,  even  for relatively simple cost functions and distributions.\footnote{For example, \cite[Corollary 1]{hanasusanto2016comment} shows that it is  \#P-hard to compute the expected value of the non-negative part of a linear combination of uniformly distributed random variables.}  This difficulty is avoided when $P_0$ is a discrete distribution supported on a finite set of moderate cardinality.  Together with this simplifying assumption on the support of $P_0$, the structural conditions provided in Section \ref{sec:tractable} can be utilized to reformulate problem \eqref{eq:CVaR_problem_reform} as a finite-dimensional convex program. 
\end{rem}

\bibliographystyle{plain}
\bibliography{references.bib}

\end{document}